\newtheorem{theo}{Theorem} 
\newtheorem{tmm}[theo]{Theorem}
\def\k{\mathbf{k}}
\def\k{\Bbbk}
\DeclareMathOperator{\Cotor}{Cotor}
\DeclareMathOperator{\Tor}{Tor}
\DeclareMathOperator{\Ext}{Ext}
\DeclareMathOperator{\Hilb}{Hilb}
\DeclareMathOperator{\cat}{cat}
\def\leq{\leqslant}
\def\geq{\geqslant}
\newcommand{\zk}{\mathcal Z_K}
\begin{document}

\title{Minimally non-Golod face rings and Massey products}

\author{Ivan Limonchenko}
\address{National Research University Higher School of Economics, Russian Federation}
\email{ilimonchenko@hse.ru}

\author{Taras Panov}
\address{Lomonosov Moscow State University, Russian Federation;\newline
Institute for Information Transmission Problems, Russian Academy of Sciences, Moscow;\newline
National Research University Higher School of Economics, Russian Federation}
\email{tpanov@mech.math.msu.su}

\subjclass[2020]{13F55, 55S30, 57S12}

\thanks{The study has been funded within the framework of the HSE University Basic Research Program.}


\maketitle

\begin{abstract}
We give a correct statement and a complete proof of the criterion obtained in~\cite{G-P-T-W} for the face ring $\Bbbk[K]$ of a simplicial complex $K$ to be Golod over a field $\Bbbk$. (The original argument depended on the main result of~\cite{BJ}, which was shown to be false in~\cite{Kat}.)  We also construct an example of a minimally non-Golod complex $K$ such that the cohomology of the corresponding moment-angle complex $\zk$ has 
trivial cup product and a non-trivial triple Massey product.
\end{abstract}

Let $K$ be a simplicial complex on the vertex set $[m]=\{1,2,\ldots,m\}$. The {\emph{face ring}} $\Bbbk[K]:=\Bbbk[v_1,\ldots,v_m]/(v_{i_1}\!\cdots v_{i_r}\,|\,\{i_1,\ldots,i_r\}\notin K)$ is called {\emph{Golod}} (over $\Bbbk$) if the product and all higher Massey products in the Koszul complex $(\Lambda[u_1,\ldots,u_m]\otimes\Bbbk[K],d)$ are trivial.
By~\cite{Go}, $\Bbbk[K]$ is a Golod ring if and only if the \emph{Serre inequality} relating the Hilbert series of $\Ext_{\Bbbk[K]}(\k,\k)$ and $\Tor_{\k[v_1,\ldots,v_m]}(\k,\k[K])$ turns into equality. If $\Bbbk[K]$ is not Golod, but $\Bbbk[K_{[m]\setminus\{i\}}]$ is Golod for any $i\in [m]$, then $\Bbbk[K]$ is called {\emph{minimally non-Golod}} (over 
$\Bbbk$).

Given a topological pair $(X,A)$, its {\emph{polyhedral product}} $(X,A)^K$ is defined as $\bigcup_{\sigma\in K}(X,A)^{\sigma}$, for $(X,A)^{\sigma}:=\prod_{i\in [m]}X_i$, where $X_i=X$, if $i\in\sigma$ and $X_i=A$, otherwise. Recall that $\zk:=(\mathbb D^2,\mathbb S^1)^K$ and $\mathit{DJ}(K):=(\mathbb{C}P^\infty,\ast)^K$. The Koszul complex 
$(\Lambda[u_1,\ldots,u_m]\otimes\Bbbk[K],d)$ is quasi-isomorphic to the cellular cochains of $\zk$ with appropriate diagonal approximation~\cite[Lemma~4.5.3]{TT}; in particular, $H^*(\zk;\k)\cong\Tor_{\k[v_1,\ldots,v_m]}(\k,\k[K])$.

\begin{tmm}\label{maingolod}
Let $\Bbbk$ be a field. The following conditions are equivalent.
\begin{itemize}
\item[(a)] $\Bbbk[K]$ is a Golod ring over $\Bbbk$;
\item[(b)] the cup product and all Massey products in $H^{+}(\zk;\Bbbk)$ are trivial;
\item[(c)] $H_{*}(\Omega\zk;\Bbbk)$ is a graded free associative algebra;
\item[(d)] the Hilbert series satisfy the identity $\Hilb(H_{*}(\Omega\zk;\Bbbk);t)=\frac{1}{1-\Hilb(\Sigma^{-1}\widetilde{H}^{*}(\zk;\Bbbk);t)}$.
\end{itemize}
\end{tmm}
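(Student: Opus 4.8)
The plan is to prove the cycle $(a)\Leftrightarrow(b)$, $(b)\Rightarrow(c)\Rightarrow(d)\Rightarrow(a)$. For $(a)\Leftrightarrow(b)$ I would use the quasi-isomorphism of differential graded algebras between the Koszul complex $(\Lambda[u_1,\ldots,u_m]\otimes\Bbbk[K],d)$ and the cochains of $\zk$ recalled before the statement. The cup product and all higher Massey products of positive-degree classes are invariants of the quasi-isomorphism type of a differential graded algebra and live on its cohomology, which here is $\widetilde H^*(\zk;\Bbbk)=H^{+}(\zk;\Bbbk)$; hence triviality of the product and all Massey products in the Koszul complex --- the very definition of Golodness --- is literally the same assertion as $(b)$. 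It is essential to retain \emph{all} Massey products rather than the cup product alone: conflating the two is exactly the slip that invalidated the earlier argument, as the example of~\cite{Kat} shows.

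The core of the argument is $(b)\Rightarrow(c)$, which I expect to be the main obstacle. Since $\zk$ is simply connected, $H_{*}(\Omega\zk;\Bbbk)$ is the homology of the cobar construction on the chain coalgebra of $\zk$, and over a field this may be computed from a minimal $A_\infty$-coalgebra structure transferred to $\widetilde H_{*}(\zk;\Bbbk)$. The quadratic part of the resulting cobar differential is dual to the cup product, and its higher parts are dual to the higher Massey products. Hypothesis $(b)$ makes all of these vanish, so the cobar differential is zero on generators and its homology is the graded free associative (tensor) algebra $T\bigl(\Sigma^{-1}\widetilde H_{*}(\zk;\Bbbk)\bigr)$, proving $(c)$. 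The delicate point --- and precisely the place where cup-product triviality alone is insufficient --- is to use the vanishing of the Massey products to choose the higher $A_\infty$-cooperations consistently to be zero, so that the cobar homology is genuinely a free algebra and not merely free to first approximation.

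For $(c)\Rightarrow(d)$, a graded free associative algebra $T(V)$ has $\Hilb(T(V);t)=\frac{1}{1-\Hilb(V;t)}$; with $V=\Sigma^{-1}\widetilde H_{*}(\zk;\Bbbk)$ as produced above and $\widetilde H_{*}\cong\widetilde H^{*}$ over a field, this is exactly the identity in $(d)$. Finally, for $(d)\Rightarrow(a)$ I would invoke Golod's theorem~\cite{Go}. Using $H_{*}(\Omega\,\mathit{DJ}(K);\Bbbk)\cong\Ext_{\Bbbk[K]}(\Bbbk,\Bbbk)$ together with the splitting $\Ext_{\Bbbk[K]}(\Bbbk,\Bbbk)\cong\Lambda[u_1,\ldots,u_m]\otimes H_{*}(\Omega\zk;\Bbbk)$ coming from the looped fibration $\zk\to\mathit{DJ}(K)\to BT^m$, one gets $\Hilb(\Ext_{\Bbbk[K]}(\Bbbk,\Bbbk);t)=(1+t)^m\,\Hilb(H_{*}(\Omega\zk;\Bbbk);t)$, while $\Tor_{\Bbbk[v_1,\ldots,v_m]}(\Bbbk,\Bbbk[K])\cong H^{*}(\zk;\Bbbk)$ as recalled above. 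Multiplying the identity of $(d)$ by $(1+t)^m$ thus turns it into the equality case of Serre's inequality, whose numerator $(1+t)^m$ is the Poincaré series of the Koszul resolution of $\Bbbk$ over $\Bbbk[v_1,\ldots,v_m]$ and whose denominator is $1-\Hilb(\Sigma^{-1}\widetilde H^{*}(\zk;\Bbbk);t)$; by~\cite{Go} this equality holds if and only if $\Bbbk[K]$ is Golod, closing the cycle.
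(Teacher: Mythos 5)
Your outer steps match the paper: (a)$\Leftrightarrow$(b) is the multiplicative quasi-isomorphism of the Koszul complex with the cochains of $\zk$ (\cite[Theorem~4.5.4]{TT}), and your (d)$\Rightarrow$(a) is exactly the paper's (a)$\Leftrightarrow$(d), i.e.\ Golod's theorem combined with $H_*(\Omega\mathit{DJ}(K);\Bbbk)\cong\Ext_{\Bbbk[K]}(\Bbbk,\Bbbk)$ and the decomposition $\Omega\mathit{DJ}(K)\simeq\Omega\zk\times\mathbb{T}^m$. The middle of your cycle, however, has two genuine gaps. The first is (b)$\Rightarrow$(c): you assert that vanishing of the cup product and of all Massey products lets you choose the transferred $A_\infty$-coalgebra structure on $\widetilde H_*(\zk;\Bbbk)$ to be identically zero, so that the cobar differential vanishes. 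That inference is false as a general principle: triviality of all higher $A_\infty$-operations (formality in the strong sense) is strictly stronger than vanishing of all Massey products, since Massey products vanish individually, each with its own indeterminacy, whereas killing the operations requires one coherent system of choices. You yourself flag this as ``the delicate point'' but offer no argument for it; this is precisely the kind of slip that invalidated~\cite{BJ}, and the paper avoids it entirely --- it never proves (b)$\Rightarrow$(c) directly, but routes (b)$\Leftrightarrow$(a)$\Leftrightarrow$(d)$\Rightarrow$(c), letting Golod's Hilbert-series theorem do the work that no soft transfer argument is known to do.

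The second gap is (c)$\Rightarrow$(d): you compute the Hilbert series of a free algebra on $V=\Sigma^{-1}\widetilde H_*(\zk;\Bbbk)$ ``as produced above'', i.e.\ you import the generating module from your (unproven) (b)$\Rightarrow$(c) step. But hypothesis (c) alone only says that $H_*(\Omega\zk;\Bbbk)$ is free on \emph{some} graded module $Q$ of indecomposables; the whole content of (c)$\Rightarrow$(d) is to show $\Hilb(Q;t)=\Hilb(\Sigma^{-1}\widetilde H^{*}(\zk;\Bbbk);t)$. The paper does this by a two-sided estimate: the Milnor--Moore (bar) spectral sequence, whose $E_2$-term for a free algebra is $\Bbbk\oplus Q$ and which converges to $\Sigma^{-1}H_*(\zk;\Bbbk)$, gives $\Hilb(\Sigma^{-1}\widetilde H_*(\zk;\Bbbk);t)\leq\Hilb(Q;t)$, while the Adams (cobar) spectral sequence gives $\Hilb(H_*(\Omega\zk;\Bbbk);t)\leq\Hilb(T\langle\Sigma^{-1}\widetilde H_*(\zk;\Bbbk)\rangle;t)$; together these force the equality of (d) without ever identifying $Q$. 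As written, your argument (even granting the $A_\infty$ step) establishes only (a)$\Leftrightarrow$(b)$\Leftrightarrow$(d) plus (b)$\Rightarrow$(c); condition (c) is never shown to imply the others, so the equivalence is not closed.
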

\begin{proof} Equivalence (a) $\Leftrightarrow$ (b) follows from~\cite[Theorem 4.5.4]{TT}.


For (a) $\Leftrightarrow$ (d), the theorem of Golod~\cite{Go} asserts that $\k[K]$ is a Golod ring if and only if the following identity for the Hilbert series holds:
\begin{equation}\label{hilb1}
  \Hilb\bigl(\Ext_{\Bbbk[K]}(\k,\k);t\bigr)=\frac{(1+t)^m}
  {1-\sum_{i,j>0}\beta^{-i,2j}(\k[K])t^{-i+2j-1}},
\end{equation}
where $\beta^{-i,2j}(\k[K])=\dim\Tor^{-i,2j}_{\k[v_1,\ldots,v_m]}(\k,\k[K])$. By~\cite[Prop.~8.4.10]{TT}, there is an isomorphism of algebras 
$H_{*}(\Omega\mathit{DJ}(K);\Bbbk)\cong\Ext_{\Bbbk[K]}(\Bbbk,\Bbbk)$. The loop space decomposition 
$\Omega\mathit{DJ}(K)\simeq\Omega\zk\times\mathbb{T}^m$~\cite[(8.16)]{TT} implies 
$\Hilb(H_{*}(\Omega\mathit{DJ}(K);\Bbbk);t)=\Hilb(H_{*}(\Omega\zk;\Bbbk);t)\cdot (1+t)^m$. Also, $\Hilb(\Sigma^{-1}\widetilde{H}^{*}(\zk;\Bbbk);t)=\sum_{i,j>0}\beta^{-i,2j}(\Bbbk[K])t^{-i+2j-1}$ by~\cite[Theorem~4.5.4]{TT}. Substituting this in~\eqref{hilb1} yields the identity of~(d).


We prove (c) $\Rightarrow$ (d). Let $Q=H_{>0}(\Omega\zk;\Bbbk)/(H_{>0}(\Omega\zk;\Bbbk)\cdot H_{>0}(\Omega\zk;\Bbbk))$ be 
the space of indecomposables. By assumption, $H_{*}(\Omega\zk;\Bbbk)=T\langle Q\rangle$, where $T\langle Q\rangle$ is the free associative algebra on the graded $\Bbbk$-module $Q$. The Milnor--Moore (bar) spectral sequence has the $E_2$-term
$E_{2}^{b}=\Tor_{H_{*}(\Omega\zk;\Bbbk)}(\Bbbk,\Bbbk)$ and converges to $\Sigma^{-1}H_{*}(\zk;\Bbbk)$. By assumption, $E_{2}^{b}\cong\Tor_{H_{*}(T\langle Q\rangle)}(\Bbbk,\Bbbk)\cong\Bbbk\oplus Q$ (as $\Bbbk$-modules), so $\Hilb(\Sigma^{-1}\widetilde{H}_{*}(\zk;\Bbbk);t)=\Hilb(E_{\infty}^{b};t)-1\leq \Hilb(E_{2}^{b};t)-1=\Hilb(Q;t)$. In particular, 
$\Hilb(T\langle\Sigma^{-1}\widetilde{H}_{*}(\zk;\Bbbk)\rangle;t)\leq \Hilb(T\langle Q\rangle;t)=\Hilb(H_{*}(\Omega\zk;\Bbbk);t)$.
The Adams (cobar) spectral sequence has $E_{2}^{c}=\Cotor_{H_{*}(\zk;\Bbbk)}(\Bbbk;\Bbbk)$ and converges to $H_{*}(\Omega\zk;\Bbbk)$. We have: $\Hilb(H_{*}(\Omega\zk;\Bbbk);t)=\Hilb(E_{\infty}^{c};t)\leq \Hilb(E_{2}^{c};t)\leq \Hilb(T\langle\Sigma^{-1}\widetilde{H}_{*}(\zk;\Bbbk)\rangle;t)$,
where the last inequality follows from the cobar construction (it turns to equality when all differentials in the cobar construction on $H_{*}(\zk;\Bbbk)$ vanish). 
By combining the two inequalities we obtain 
\[
  \Hilb\bigl(H_{*}(\Omega\zk;\Bbbk);t\bigr)=
  \Hilb\bigl(T\langle\Sigma^{-1}\widetilde{H}_{*}(\zk;  \Bbbk)\rangle;t\bigr)=
  \frac{1}{1-\Hilb(\Sigma^{-1}\widetilde{H}^{*}(\zk;\Bbbk);t)},
\]
proving (d).


To prove (d) $\Rightarrow$ (c) observe that the identity of (d) is equivalent to
$\Hilb(H_{*}(\Omega\zk;\Bbbk);t)=\Hilb(T\langle\Sigma^{-1}\widetilde{H}_{*}(\zk;\Bbbk)\rangle;t)$.
This implies that all differentials in the Adams cobar construction on $H_{*}(\zk;\Bbbk)$ are trivial. Thus, $H_{*}(\Omega\zk;\Bbbk)$ is a free associative algebra (on $\Sigma^{-1}\widetilde{H}_{*}(\zk;\Bbbk)$). 
\end{proof}

As a corollary of Theorem~\ref{maingolod} we get that $\Bbbk[K]$ is a Golod ring if and only if  both the Milnor--Moore and Adams spectral sequences for $\zk$ degenerate in the $E_2$-term. For the Golod complex $K$ with $\cat(\zk)>1$ constructed in~\cite{IK}, this gives a stronger lower bound for the order of vanishing differentials in the Milnor--Moore spectral sequence than the Ginsburg theorem~\cite{Gin}.  

In the case of flag $K$, it is proven in~\cite{G-P-T-W} that $\Bbbk[K]$ is Golod if and only if $\mathrm{cup}(\zk)=1$, and if $\k[K]$ is minimally non-Golod, then $\mathrm{cup}(\zk)=2$. In general, for a minimally non-Golod $\k[K]$, we have an upper bound $\mathrm{cup}(\zk)\leq 2$, which follows easily from Baskakov's description of the product in~$H^*(\zk;\k)$, see~\cite[Theorem 4.5.4]{TT}. Building upon a construction of~\cite{Kat}, we give an example of a minimally non-Golod complex $\mathcal K$ such that $\mathrm{cup}(\mathcal Z_\mathcal K)=1$ and $H^*(\mathcal Z_\mathcal K)$ has a non-trivial triple Massey product.

\begin{tmm} Let $\mathcal K$ be given by its minimal non-faces: $(1,2,3)$, $(4,5,6)$, $(7,8,9)$, $(1,4,7)$, $(1,2,4,5)$, $(5,6,7,8)$, $(2,3,7,8)$, $(2,3,5,6,7)$, $(1,2,4,6,8,9)$,
$(1,3,4,5,8,9)$, $(1,3,5,6,7,9)$, $(2,3,4,5,7,9)$, $(2,3,4,5,8,9)$, $(2,3,4,6,7,9)$, $(2,3,5,6,8,9)$. Then $\mathcal K$ is a $4$-dimensional minimally non-Golod complex such that $\mathrm{cup}(\mathcal Z_{\mathcal K})=1$ and there exists a non-trivial indecomposable triple Massey product of $5$-dimensional Koszul cohomology classes in $H^*(\mathcal Z_\mathcal K)\colon$
$$
\langle[v_1v_2u_3],[v_5v_6u_4],[v_7v_8u_9]\rangle=\{[v_1v_2v_5v_7v_8u_3u_4u_6u_9]\}.
$$ 
\end{tmm}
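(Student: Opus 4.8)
The plan is to compute the Massey product directly in the Koszul complex $(\Lambda[u_1,\ldots,u_9]\otimes\Bbbk[\mathcal K],d)$, where $du_i=v_i$ and $dv_i=0$, and then to identify the resulting class and its indeterminacy through Baskakov's description of $H^*(\mathcal Z_{\mathcal K})$ in terms of full subcomplexes (Theorem~4.5.4 of~\cite{TT}). First I would record that the cocycles $a=v_1v_2u_3$, $b=v_5v_6u_4$, $c=v_7v_8u_9$ are closed precisely because $\{1,2,3\}$, $\{4,5,6\}$, $\{7,8,9\}$ are minimal non-faces, so that $v_1v_2v_3=v_4v_5v_6=v_7v_8v_9=0$; under the identification $H^{-i,2j}(\mathcal Z_{\mathcal K})\cong\bigoplus_{|J|=j}\widetilde H^{j-i-1}(\mathcal K_J)$ these are the generators of $\widetilde H^1$ of the subcomplexes $\mathcal K_{\{1,2,3\}}$, $\mathcal K_{\{4,5,6\}}$, $\mathcal K_{\{7,8,9\}}$, each a boundary of a triangle, hence a circle.

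For the Massey product itself I would exploit the specific non-faces. The product $bc=v_5v_6v_7v_8u_4u_9$ vanishes identically in the Koszul complex because $\{5,6,7,8\}$ is a non-face, so one may take the defining cochain for $bc$ to be $0$. The product $ab=v_1v_2v_5v_6u_3u_4$ does not vanish on the nose, but it is a coboundary: setting $f=v_1v_2v_5u_3u_4u_6$ one computes $df=v_1v_2v_5(v_3u_4u_6-v_4u_3u_6+v_6u_3u_4)$, where the first two summands vanish because $\{1,2,3\}$ and $\{1,2,4,5\}$ are non-faces, leaving $df=v_1v_2v_5v_6u_3u_4=ab$. With these defining cochains the Massey product is represented by $fc=v_1v_2v_5v_7v_8u_3u_4u_6u_9$, the asserted element; that it is a cocycle follows from $d(fc)=(ab)c=v_1v_2v_5v_6v_7v_8u_3u_4u_9=0$, again because $\{5,6,7,8\}$ is a non-face.

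Next I would settle that the Massey product is a single class equal to this representative. Once $\mathrm{cup}(\mathcal Z_{\mathcal K})=1$ is known, the indeterminacy $a\cdot H^{+}(\mathcal Z_{\mathcal K})+H^{+}(\mathcal Z_{\mathcal K})\cdot c$ is trivial, so $\langle a,b,c\rangle$ is the one-element set $\{[fc]\}$, whose element is indecomposable simply because every cup product in $H^{+}$ vanishes; non-triviality then reduces to non-vanishing of the single class $[v_1v_2v_5v_7v_8u_3u_4u_6u_9]$. Under the Baskakov identification this class lies in $\widetilde H^{4}(\mathcal K)$ (its bidegree is $(-4,18)$, and $9-4-1=4$), where it is the cochain dual to the $4$-face $\{1,2,5,7,8\}$; I would verify it is non-zero by exhibiting a simplicial $4$-cycle of $\mathcal K$ on which this cochain evaluates non-trivially, i.e.\ by a direct computation of the top reduced cohomology $\widetilde H^{4}(\mathcal K)$.

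The main obstacle, and the bulk of the work, is the combinatorial verification that $\mathcal K$ has exactly the required global structure: that it is $4$-dimensional (every $6$-subset of $[9]$ contains one of the listed non-faces, while e.g.\ $\{1,2,5,7,8\}$ is a face), that $\mathrm{cup}(\mathcal Z_{\mathcal K})=1$, that the class above is non-zero in $\widetilde H^4(\mathcal K)$, and that each deletion $\mathcal K_{[9]\setminus\{i\}}$ is Golod. Non-Golodness of $\mathcal K$ is immediate from the non-trivial Massey product via the equivalence (a)$\Leftrightarrow$(b) of Theorem~\ref{maingolod}; minimality rests on the fact that each vertex of $[9]$ lies in exactly one of the triples $\{1,2,3\}$, $\{4,5,6\}$, $\{7,8,9\}$, so deleting any vertex collapses the corresponding circle and destroys the Massey product. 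I expect the delicate point to be confirming that deleting a vertex leaves \emph{no} surviving non-trivial product or higher Massey product, that is, that all nine deletions are genuinely Golod, which, together with cup-triviality and the non-vanishing of the top class, I would establish by the finite computation of the reduced cohomology of the full subcomplexes $\mathcal K_J$ and of their join-induced products, following the design of the construction in~\cite{Kat}.
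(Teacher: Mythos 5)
Your explicit Koszul-complex computation is correct and, for that part of the theorem, more self-contained than the paper's own argument: you exhibit the defining system directly (using that $\{5,6,7,8\}$ being a non-face kills $bc$ on the nose, and that $f=v_1v_2v_5u_3u_4u_6$ bounds $ab$ thanks to the non-faces $\{1,2,3\}$ and $\{1,2,4,5\}$), and you kill the indeterminacy and get indecomposability for free from $\mathrm{cup}(\mathcal Z_{\mathcal K})=1$. The paper instead invokes \cite[Lemma~3.3]{L2019} for definedness and single-valuedness (via $\widetilde{H}^*(\mathcal K_{\{1,\ldots,6\}})=\widetilde{H}^*(\mathcal K_{\{4,\ldots,9\}})=0$) and runs a separate full-subcomplex check for indecomposability; your route through cup-triviality is a legitimate simplification, and your identification of the representative with a class in $\widetilde{H}^4(\mathcal K)$ dual to the face $\{1,2,5,7,8\}$ matches the paper's non-triviality argument.

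The genuine gap is the step you yourself flag as delicate: Golodness of the nine deletions $\mathcal K_{[9]\setminus\{i\}}$. You propose to establish it ``by the finite computation of the reduced cohomology of the full subcomplexes $\mathcal K_J$ and of their join-induced products.'' Such a computation can only show that the \emph{cup product} in $H^{+}(\mathcal Z_{\mathcal K_{[9]\setminus\{i\}}})$ is trivial; Golodness additionally requires the vanishing of \emph{all higher Massey products}, and triviality of the products does not imply this. That false implication is precisely the error of \cite{BJ} that \cite{Kat} refuted --- Katth\"an's own complex has trivial multiplication on Koszul homology yet is non-Golod --- and correcting its consequences is the point of the present paper, so a proof of this theorem cannot rest on it. The paper closes this step by citing \cite[Theorem~6.3~(5)]{Kat}: the complex $\mathcal K$ is built on Katth\"an's construction exactly so that his theorem, which asserts Golodness of these vertex deletions by a genuine structural argument, applies. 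Without that citation you would need to verify, for each deletion, one of the equivalent conditions of Theorem~\ref{maingolod} --- e.g.\ the Hilbert series identity in (d), i.e.\ a Poincar\'e series computation, or freeness of $H_*(\Omega\mathcal Z_{\mathcal K_{[9]\setminus\{i\}}};\Bbbk)$ --- none of which reduces to a cohomology-of-subcomplexes calculation. Relatedly, your remark that minimality follows because each vertex lies in one of the triples $\{1,2,3\},\{4,5,6\},\{7,8,9\}$ only explains why \emph{this particular} Massey product dies upon deletion; it does not rule out other obstructions to Golodness in the deleted complexes, which is exactly what Katth\"an's theorem does.
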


\begin{proof}
The description of the cup product for $\mathcal Z_{\mathcal K}$ \cite[Theorem 4.5.4]{TT} implies that $\mathrm{cup}(\mathcal Z_\mathcal K)=1$. The full subcomplex $\mathcal K_{[m]\setminus\{i\}}$ is Golod for any $i\in [m]$, by~\cite[Theorem~6.3~(5)]{Kat}. It suffices to show that the triple Massey product above is defined, non-trivial and indecomposable, since then $\mathcal K$ is not Golod itself and therefore is a minimally non-Golod complex. 

The triple Massey product above is defined and single-valued due to~\cite[Lemma~3.3]{L2019}, since $\widetilde{H}^*(\mathcal K_{\{1,2,3,4,5,6\}})\cong\widetilde{H}^*(\mathcal K_{\{4,5,6,7,8,9\}})=0$. It is non-trivial, since $[v_1v_2v_5v_7v_8u_3u_4u_6u_9]$ corresponds to a non-zero class in $H^4(\mathcal K)$ and $\dim\mathcal K=4$. 

It is indecomposable, since $\widetilde{H}^*(\mathcal K_{[m]\setminus\{1,2,3\}})\cong\widetilde{H}^*(\mathcal K_{[m]\setminus\{4,5,6\}})\cong\widetilde{H}^*(\mathcal K_{[m]\setminus\{7,8,9\}})=0$ and $\widetilde{H}^p(\mathcal K_{[m]\setminus\{1,4,7\}})\cong\Bbbk$ if $p=4$ and is zero otherwise, whereas $\widetilde{H}^q(\mathcal K_{\{1,4,7\}})\cong\Bbbk$ if $q=1$ and is zero otherwise. 
\end{proof}

It follows directly from~\cite[Theorem 6.3~(5),(7)]{Kat} that if $K$ is a minimally non-Golod simplicial complex such that $\mathrm{cup}(\zk)=1$ and there exists a non-trivial Massey product in $H^{+}(\zk)$, then $\dim(K)\geq\dim(\mathcal K)=4$ and $f_0(K)\geq f_0(\mathcal K)=9$.

We are grateful to Victor Buchstaber for fruitful discussions. The first author is a Young Russian Mathematics award winner and would like to thank its sponsors and jury.

%
%
%
%
\end{document}